\definecolor{MyDarkBlue}{rgb}{0,0.08,0.50}
\definecolor{BrickRed}{rgb}{0.65,0.08,0}
\numberwithin{equation}{section}
\newcommand{\ba}{{\mathbf a}}
\newcommand{\bA}{{\mathbf A}}
\newcommand{\bb}{{\mathbf b}}
\newcommand{\bc}{{\mathbf c}}
\newcommand{\be}{{\mathbf e}}
\newcommand{\bn}{{\mathbf n}}
\newcommand{\bq}{{\mathbf q}}
\newcommand{\bI}{{\mathbf I}}
\newcommand{\bV}{{\mathbf V}}
\newcommand{\bW}{{\mathbf W}}
\newcommand{\bx}{{\mathbf x}}
\newcommand{\bone}{{\bf 1}}
\newcommand{\blambda}{\boldsymbol\lambda}
\newcommand{\bmu}{\boldsymbol\mu}
\newcommand{\bpi}{\boldsymbol\pi}
\newcommand{\btau}{\boldsymbol\tau}
\newcommand{\smult}{\circ}
\def\today{\number\day\space\ifcase\month\or January\or February\or
March\or April\or May\or June\or July\or August\or September\or
October\or November\or December\fi\space\number\year}
\newcommand{\cov}{\mathop{\rm cov}\nolimits}
\newcommand{\var}{\mathop{\rm var}\nolimits}
\newcommand{\bbbJ}{\mathbb{J}}
\newcommand{\bbbE}{\mathbb{E}}
\newcommand{\bbbP}{\mathbb{P}}
\newcommand{\bbbR}{\mathbb{R}}
\newcommand{\calT}{{\cal T}}
\newcommand\lstar{{\overline{\lambda}}}
\def\Frac#1#2{\frac
{
 {\raise.6ex
 \hbox{$\disp#1$}}
}
{
 {\lower.6ex
 \hbox{$\disp#2$}}
 }
}
 \newtheorem{theorem}{Theorem}
 \newtheorem{lemma}{Lemma}
 \newtheorem{corollary}{Corollary}
 \theoremstyle{definition}
 \newtheorem{example}{Example}
\newcommand\Rpos{\bbbR_+}
\def\calP{{\cal P}}
\def\d{{\rm d}}
\def\e{{\rm e}}
\def\E{{\bbbE}}
\newcommand{\eqan}[1]{\begin{align} #1 \end{align}}
\newcommand{\RRR}{\mathcal{D}}
\begin{document}
\newcommand{\Bin}{\mathop{\rm{Bin}}\nolimits}
\newcommand{\card}{\mathop{\rm{card}}\nolimits}
\newcommand{\Exp}{\mathop{\rm{Exp}}\nolimits}
\newcommand{\rur}{\rule{\breite}{0.06mm}\vspace{1mm}\\}
\newcommand{\rP}{{\rm P}}
\newcommand{\Pik}{P_k}
\newcommand{\Js}{J}
\newcommand{\Nbp}{N_\Js}


\newcommand{\wtt}{\widetilde{t}}
\newcommand{\btT}{\widetilde{\mathbf T}}
\newcommand{\btI}{\widetilde{\bI}}
\newcommand{\btV}{\widetilde{\bV}}
\newcommand{\btW}{\widetilde{\bW}}
\newcommand{\btmu}{\widetilde{\bmu}}
\newcommand{\bttau}{\widetilde{\btau}}
\newcommand{\btq}{\widetilde{\bq}}
\newcommand{\LS}{{\calP}}
\newcommand{\talpha}{\widetilde{\alpha}}
\newcommand{\tN}{\widetilde{N}}
\newcommand{\tR}{\widetilde{R}}
\newcommand{\tv}{\widetilde{v}}
\newcommand{\tw}{\widetilde{w}}
\newcommand{\tu}{\widetilde{u}}
\def\by{{\bf y}}
\newcommand{\eqd}{=^d} 

\title{Second Order Properties of Thinned Counts\\
in Finite Birth--Death Processes}   
\author{
Daryl.\ J.\ Daley\footnote{Department of Mathematics and Statistics, The University of Melbourne. 
}, \
Yoni Nazarathy\footnote{School of Mathematics and Physics, The University of Queensland. \tt y.nazarathy@uq.edu.au.}, 
Jiesen Wang\footnote{Korteweg-de Vries Institute for Mathematics, University of Amsterdam, Amsterdam, The Netherlands. \tt  j.wang2(at)uva.nl.}
}

\maketitle

\abstract{
%
The paper studies the counting process arising as a subset of births and deaths in a birth--death process on a finite state space.  Whenever a birth or death occurs, the process is incremented or not depending on the outcome of an independent Bernoulli experiment whose probability is a {\color{black}function of the state of the birth and death process and also depends on whether it is a birth or death that has occurred}.
We establish a formula for the asymptotic variance rate of this process, also presented as the ratio of the asymptotic variance and the asymptotic mean. Several examples including queueing models illustrate the scope of applicability of the results. An analogous formula for the countably infinite state space is conjectured and tested.
}

\section{Introduction}
\label{sec:intro}

We first became aware of the second-order properties discussed in this note in various queueing models in which the simpler cases can be cast purely in a birth--death process setting where computation is more readily effected. In such cases there is a phenomenon, called BRAVO {\color{black} (Balancing Reduces Asymptotic Variance of Outputs)}, that appears in the limit as the state space grows. {\color{black} For simple output counting processes as in the case of M/M/1/K queues, the study of BRAVO simply requires considering the counting process of deaths in a birth-death process. Yet in more complex models, a more general counting process is required.} 
In this paper we exhibit more general processes $N_q$ defined by counting the births and deaths that remain after independent thinnings of these two types of events in a birth--death process and for which the BRAVO characteristics may or may not survive the thinning procedure.

Let $\{Q(t):t\ge 0\}$ be an irreducible 
continuous-time birth--death process on the state space 
${\mathbb J} := \{0,1,\ldots,\Js\}$ with $J < \infty$. 
Any realization of $Q$ is expressible in the form
\begin{equation} \label{eq:Qdecomp}
Q(t) = Q(0) ~+~ N_+(0,t] ~-~ N_-(0,t],
\end{equation}
where the counting measures $N_+$ and $N_-$ have unit atoms at the
instants of births and deaths respectively.  Thus, $N_+$ is the process of births in $Q$, and $N_-$ the process of deaths in $Q$.

We thin the sum-process $N_+ + N_-$ to obtain $N_q$ as follows.  Let
$\{q^+_j:j\in\mathbb{\Js}\}$ and
$\{q^-_j:j\in\mathbb{\Js}\}$ be two
families of $[0,1]$-valued constants, where there is at least one $j \in {\mathbb J}$ where either $q_j^+ > 0$, or $q_j^- > 0$, or both. Note that at the endpoints we set $q_0^- = 0$ and 
$q_J^+ = 0$. 

For each atom $s$ of $N_+$ let $\tilde J^+(s) = 1$ with probability $q^+_{Q(s^-)}$, or let $\tilde J^+(s) = 0$ otherwise. Similarly, for each atom $s$ of $N_-$ let $\tilde J^-(s) = 1$ with probability $q^-_{Q(s^-)}\,$, or let  $\tilde J^-(s) =0$ otherwise. Hence the probability of having atoms for $\tilde J^+$ and $\tilde J^-$ depend on the state of $Q$ just before the jump. Now let
\begin{equation}
\label{eq:19StanleySt}
N_q(0,t] = \int_0^t \tilde J^+(u)\,N_+(\d u)
~+~ 
 \int_0^t \tilde J^-(u)\,N_-(\d u),
\end{equation}
where conditional on $\{Q(u): u< t'\}$ for any finite $t'$, all indicator r.v.s $\tilde J^\pm(t)$ for $t < t'$ are mutually independent. 

This paper studies the asymptotic variance also represented via the asymptotic index of dispersion of $N_q$,
 \begin{equation} 
 \label{eq:asymvar}
\RRR := \lim_{t\to\infty} \frac{\var N_q(0,t]}{\E\big[N_q(0,t]\big]},
\end{equation}
{\color{black} namely the limit of the variance of $N_q(0,t]$ divided by the expectation of $N_q(0,t]$.} 
In mathematical biology, for a general stationary counting process $N_q$,
$\RRR$ is known as the Fano factor, as for example in Eden and Kramer
\cite{EdenK}.  
The denominator and numerator here are asymptotically 
linear in $t\ge 0$, and 
the limit in \eqref{eq:asymvar} is always finite and positive.
Our main result, Theorem~\ref{thm:main}, presents a formula for
$\RRR$ in terms of $\{q^\pm_j:j\in\mathbb{\Js}\}$ and the birth- and
death-rates of $Q$. To our knowledge this condensed explicit formula is new. Yet we mention that since the state space is finite, computable matrix based expressions for $\RRR$ can be obtained by considering the process as a MAP (Markovian Arrival Process); see for example~\cite{he2014fundamentals}. 

In the special case of a pure death-counting process (all $q_i^+=0$), no thinning
($q^-_i \equiv 1$), a similar formula was derived in \cite{NazarathyWeiss0336} where the BRAVO effect was first noted. {\color{black} BRAVO is a phenomenon observed in certain queueing systems, where if the arrival rate equals (exactly or approximately) the service capacity, then the asymptotic variance of the departure (outputs) process counts reduces in comparison to cases where the arrival rate and service capacity are different. For simple examples of queues, such as the M/M/1/K queue, the study of BRAVO requires study of the process $N_-$, as this is the output process of the queue. However, when considering more complex examples, such as queues with reneging (see Example~1 in Section~\ref{sec:exx}), to study the output process we need to introduce state dependent thinning as appearing in the second term of \eqref{eq:19StanleySt}.} 

In a sequence of papers associated with BRAVO \cite{daley2011revisiting, daley2013bravo, glynn2023heavy, hautphenne2013second, nadarajah2018precise, nazarathy2011variance,nazarathy2022busy, NazarathyWeiss0336}, it was shown that for non-small $J$,  it holds that $\RRR \approx 1$, except for `balanced` singular cases in which the arrival rate equals the service capacity. In those cases $\RRR \approx {\cal B}$ where for many standard queueing models, the BRAVO constant ${\cal B} < 1$. This constant equals $2/3$ for simple single-server Markovian queues with a finite buffer, \cite{hautphenne2013second, NazarathyWeiss0336}; equals $2(1-2/\pi)\approx 0.7268$ for infinite buffer cases with infinite buffers,
\cite{al2011asymptotic, nazarathy2022busy}; and equals $\approx 0.6093$ in many-server queues under
the so-called Halfin--Whitt scaling regime, \cite{daley2013bravo}. Note also a correction paper for some of the computations associated with that result, \cite{nadarajah2018precise}.

Further, for queues with renewal input (i.e.\ GI/$\cdot/\cdot)$ systems), the first multiple, $2$, in $2(1-2/\pi)$ is replaced by the sum of the squared coefficients of variation of the inter-arrival and service times driving a GI/G/$s$ queue, \cite{al2011asymptotic}; similarly in the finite case, as conjectured and numerically tested, \cite{nazarathy2011variance}, ${\cal B}$ equals the sum of the squared coefficients of variations divided by $3$.
All of these examples motivate the study of the asymptotic index of dispersion, $\RRR$.

For simple queueing models with reneging, the question of the existence of BRAVO remains.  When such queueing processes are represented by birth--death models, a thinning mechanism is required, since some deaths count as departures but others are reneging customers.  This motivated us to seek a general formula for
$\RRR$. Here we prove our formula for $\RRR$
for the finite ${\mathbb J}$ case. We also explore its use for the countably infinite case, and leave the exact conditions of when this infinite case holds for further research.  Our derivation follows from detailed regenerative arguments, including the manipulation of moment expressions via matrix algebra. 
The derivation also hinges on an explicit inverse of a matrix, which to the best of our knowledge has not appeared earlier in this form, and can in principle be used for other birth and death related results.

The paper is organized as follows. In Section~\ref{sec:main-result} we present the main result, Theorem~\ref{thm:main}. We then prove the main result in Section~\ref{sec:finite-proof}. The proof hinges on an explicit formula for the inverse of a matrix which may be of independent interest and is also presented as a lemma in Section~\ref{sec:finite-proof}. 
We close with examples in Section~\ref{sec:exx}.


\section{Setup and Main Result}
\label{sec:main-result}

Let $\{Q(t), t \ge 0\}$ be an irreducible continuous-time birth--death (BD)
Markov chain on finite state space $\bbbJ := \{0,1,\ldots,\Js\}$, for finite positive integer $J$.
The birth rates are $\lambda_0,\ldots, \lambda_J$ and death rates are $\mu_0, \ldots ,\mu_J$ with
$\lambda_J = \mu_0=0$ and all other rates positive. 


The stationary (and limiting) probabilities $\pi_i := \lim_{t \to \infty} \bbbP\{Q(t) = i\}$
satisfy the partial balance relations
\begin{equation} 
\label{detbalance}
\pi_i \mu_i = \pi_{i-1} \lambda_{i-1} \qquad \text{for} \qquad i \in \bbbJ \setminus \{0\}, 
\end{equation} 
and therefore
\begin{equation} \label{eq:21}
 \pi_i  
  = \frac{  \prod_{k=1}^i (\lambda_{k-1}/\mu_k) }
 {\sum_{j \in \bbbJ}  \prod_{k=1}^j (\lambda_{k-1}/\mu_k) }
 \,=\, \pi_0 \prod_{k=1}^i \frac{\lambda_{k-1}}{\mu_k}
\qquad
\text{for}
\qquad
i\in\bbbJ,
\end{equation}
with empty products being unity (so $\pi_0 \ne 0$). We denote the
cumulative distribution elements as 
\begin{equation} \label{Pkdn}
\Pik := \sum_{i=0}^k \pi_i,
\qquad
\text{for}
\qquad
k \in \bbbJ.
\end{equation}
Denote the {\em raw rate} of {\color{black} births and deaths} as,
\begin{equation}
\label{lambdaQ}
\lstar_\star =
\sum_{i \in \bbbJ} \pi_i (\lambda_i + \mu_i) = 
2\sum_{i\in \bbbJ} \pi_i\lambda_i,
\end{equation}
where the second equality follows from \eqref{detbalance}. The rate $\lstar_\star $ satisfies, 
\[
\lim_{t \to \infty} \frac{\E\big[N_+(0,t] + N_-(0,t]\big]}{t} = \lstar_\star.
\]
With each state $i \in \bbbJ$, associate the probabilities
$q^\pm_i \in [0,1]$ and let $N_q(\cdot)$ denote the thinned sum-process based on $Q$ as described
in \eqref{eq:19StanleySt}. With this, define the {\em thinned rate} as
\begin{equation}
\label{eq:lambda-bar-def-first}
\lstar
= \sum_{i \in \bbbJ} \pi_i (\lambda_i q^+_i  +
\mu_i q^-_i).
\end{equation}
This rate $\lstar$ satisfies,
\[
\lim_{t \to \infty} \frac{\E\big[N_q(0,t] \big]}{t} = \lstar.
\]
%
Note also that ergodicity shows that in the long run $N_q(\cdot)$ counts a proportion $\varpi$
of all births and deaths, where 
\begin{equation} 
\label{countratio}
\varpi = \frac{\lstar}{\lstar_\star} = \lim_{t \to \infty} \frac{\E\big[N_q(0,t] \big]}{\E\big[N_+(0,t] + N_-(0,t]\big]} = 
\lim_{t \to \infty} \E \Big[\frac{N_q(0,t] }{N_+(0,t] + N_-(0,t]}\Big]. 
\end{equation}
Also note that, 
\begin{equation} 
\label{eq:EN22}  
\lstar = 
\sum_{i=0}^{J-1} \pi_i\lambda_i q^+_i  +
\sum_{i=1}^J \pi_i\mu_i q^-_i 
= \sum_{i=0}^{J-1} \pi_i \lambda_i(q^+_i + q^-_{i+1}),
\end{equation}
where the last equality follows using \eqref{detbalance}. 
%

We also use the partial sums
\begin{equation}
\lstar_k :=
  \sum_{i=0}^k \pi_i\lambda_i q^+_i+
\sum_{i=1}^k \pi_i\mu_i q^-_i
= \pi_k \lambda_k q_k^+ + \sum_{i=0}^{k-1} \pi_i \lambda_i(q^+_i + q^-_{i+1}),
\label{eq:little-lambda-k-def}
\end{equation}
and observe that 
$\lstar_J = \lstar$. 
Further, when normalized as
\begin{equation} \label{eq:Lambda}
 \Lambda_k := \frac{\lstar_k}{\lstar}
  \,=\,
\frac{ \pi_k\lambda_kq^+_k
+ \sum_{i=0}^{k-1} \pi_i \lambda_i (q^+_i+q_{i+1}^-)} {\lstar}
  \qquad
  \text{for}
  \qquad k\in\bbbJ,
\end{equation}
$\{\Lambda_k\}$ defines a cumulative distribution in the same manner that $\{P_k\}$ does.  



Theorem \ref{thm:main} presents a formula for the asymptotic index of dispersion 
$\RRR$ at \eqref{eq:asymvar} in terms of the distribution functions $\{P_k\}$
and $\{\Lambda_k\}$, the counting rate $\lstar$ and the sequences
$\{q_k^\pm\}$, $\{\lambda_k\}$ and $\{\pi_k\}$.
  If $N_q(\cdot)$ were a Poisson process we should have $\RRR=1$.
This is certainly not the case in general. 
Our formula 
directly generalizes Theorem 1 of \cite{NazarathyWeiss0336} which handles the pure death-counting process case of $q_i^- \equiv 1$ and
$q_i^+ \equiv 0$; so $N_q(0,t] = N_-(0,t]$ as in \eqref{eq:Qdecomp}. The result in \cite{NazarathyWeiss0336} is  proved by combining results for Markovian Arrival Processes with limit results
for Markov Modulated Poisson Processes; the method there requires the
retention probabilities $q_i^\pm$ to be $\{0,1\}$-valued.  Our proof is
quite different: it is more elementary, albeit with tedious calculations;
importantly, it no longer requires $q_i^- = 1$ and $q_i^+ = 0$.




\begin{theorem} 
\label{thm:main} 
Consider the finite state space case where irreducible $Q(\cdot)$ follows any initial distribution on $\bbbJ$. The asymptotic index of dispersion of $N_q$ is,
\begin{equation}    
\label{eq:RJgenPformula}
\RRR = 1+2 \sum_{k=0}^{J-1} R_k,
\end{equation} 
where,
\begin{equation}
\label{eq:R-k}
R_k = (P_k -\Lambda_k)\bigg(
   \frac{\lstar (P_k-\Lambda_k) }{\pi_{k}\lambda_k} + q^+_k - q^-_{k+1} \bigg).
\end{equation}
\end{theorem}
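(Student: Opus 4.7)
The plan is to decompose $N_q(0,t] - \lstar t$ into two mean-zero square-integrable martingales plus a bounded remainder, and then read off $\RRR$ from the predictable quadratic variations and covariation of those martingales. First, the compensated counting process
\begin{equation*}
M_1(t) := N_q(0,t] - \int_0^t h(Q(s))\,\d s, \qquad h(i) := \lambda_i q^+_i + \mu_i q^-_i,
\end{equation*}
is a martingale with $\langle M_1\rangle_t = \int_0^t h(Q(s))\,\d s$. Setting $f := h - \lstar$, the definition~\eqref{eq:lambda-bar-def-first} gives $\sum_i \pi_i f(i) = 0$, so the Poisson equation $Gg = -f$ (with $G$ the generator of $Q$) has a solution $g$ on $\bbbJ$. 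For a birth--death chain this equation can be solved explicitly: multiplying through by $\pi_i$, using the detailed-balance relation~\eqref{detbalance} and telescoping produces
\begin{equation*}
g(i+1) - g(i) \,=\, -\frac{1}{\pi_i \lambda_i}\sum_{k=0}^{i} \pi_k f(k) \,=\, \frac{\lstar\,(P_i - \Lambda_i)}{\pi_i \lambda_i},
\end{equation*}
where the last equality uses $\sum_{k=0}^i \pi_k h(k) = \lstar_i = \lstar\,\Lambda_i$ from~\eqref{eq:little-lambda-k-def}--\eqref{eq:Lambda}. By Dynkin's formula, $M_2(t) := g(Q(t)) - g(Q(0)) + \int_0^t f(Q(s))\,\d s$ is a mean-zero martingale, and
\begin{equation*}
N_q(0,t] - \lstar t \,=\, M_1(t) + M_2(t) + g(Q(0)) - g(Q(t)).
\end{equation*}

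Because $g$ is bounded on the finite state space, the last two terms contribute $O(1)$ to the variance and
\begin{equation*}
\lim_{t\to\infty}\frac{\var N_q(0,t]}{t} \,=\, \lim_{t\to\infty}\frac{\E\langle M_1\rangle_t}{t} + 2\lim_{t\to\infty}\frac{\E\langle M_1,M_2\rangle_t}{t} + \lim_{t\to\infty}\frac{\E\langle M_2\rangle_t}{t}.
\end{equation*}
The first limit equals $\sum_i \pi_i h(i) = \lstar$, which after dividing by $\lim \E N_q(0,t]/t = \lstar$ supplies the leading `$1$' in~\eqref{eq:RJgenPformula}. The third limit uses the carr\'e du champ $\Gamma(g,g)(i) = \lambda_i(g(i+1)-g(i))^2 + \mu_i(g(i-1)-g(i))^2$; re-indexing the down-step contribution via detailed balance collapses it onto the up-step one, yielding $2\sum_{i=0}^{J-1}\pi_i\lambda_i(g(i+1)-g(i))^2$. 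The covariation picks up a contribution only at jumps of $Q$ that are also counted by $N_q$, so splitting by births (thinned by $q^+_i$) and deaths (thinned by $q^-_i$) and again invoking detailed balance gives $\sum_{i=0}^{J-1}\pi_i\lambda_i(q^+_i - q^-_{i+1})(g(i+1)-g(i))$.

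Finally, substituting the closed-form identity $\pi_i\lambda_i(g(i+1)-g(i)) = \lstar(P_i - \Lambda_i)$ into these three pieces and then dividing by $\lstar$ collapses the expression to precisely $1 + 2\sum_{k=0}^{J-1} R_k$ with $R_k$ as in~\eqref{eq:R-k}. The main obstacle in this plan is the boundary bookkeeping: both detailed-balance reindexings must close cleanly at $i=0$ and $i=J$ without surplus terms arising from the conventions $\mu_0 = \lambda_J = 0$, and the explicit Poisson solution must be shown consistent with the boundary equation at $i=J$ (equivalently, the telescoping identity $\sum_{k=0}^{J} \pi_k f(k) = 0$ must match the recursion). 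Once these checks are in place the remainder of the argument is routine, and the conclusion holds for any initial distribution on $\bbbJ$ since the boundary term $g(Q(0))-g(Q(t))$ is uniformly bounded.
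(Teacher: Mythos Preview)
Your argument is correct and complete in outline; all the identities check (the Poisson-equation recursion closes at $i=J$ precisely because $\sum_k\pi_kf(k)=0$, and the detailed-balance reindexings leave no boundary residue since $\mu_0=\lambda_J=0$). However, the route is genuinely different from the paper's. The paper proceeds via regeneration: it waits for returns of $Q$ to state $0$, casts $N_q$ as a renewal--reward process, invokes the Brown--Solomon variance formula, and then computes the required second moments $\E X^2$, $\E Y^2$, $\E XY$ by a backward first-step analysis whose linear system has coefficient matrix $\bW$; the closed form emerges only after an explicit inversion of $\bW$ (Lemma~\ref{lem:W}) and a fair amount of bilinear-form algebra. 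Your approach instead uses the martingale CLT machinery: compensate $N_q$, solve the Poisson equation $Gg=-f$ in closed form via detailed balance, and read off the asymptotic variance from $\langle M_1\rangle$, $\langle M_2\rangle$ and $\langle M_1,M_2\rangle$. This is considerably shorter and bypasses both the transform analysis and the matrix inverse; it also makes transparent why the summand $R_k$ factors as it does (one factor of $P_k-\Lambda_k$ from each martingale increment $g(k{+}1)-g(k)$). What the paper's route buys in exchange is an elementary, self-contained derivation that does not invoke predictable quadratic (co)variation or the carr\'e du champ, together with the explicit tridiagonal inverse in Lemma~\ref{lem:W}, which it flags as of independent interest. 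Your method, on the other hand, is better positioned for the countably infinite case conjectured in~\eqref{eq:Dinfbd}: the only missing ingredients there are integrability of $g$ and a suitable $L^2$ bound to control the remainder $g(Q(0))-g(Q(t))$.
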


\subsection*{The Infinite State Space Case}

While Theorem~\ref{thm:main} is limited to a finite state space, a natural extension is to consider a countably infinite state space where $\lambda_i > 0$ for all all $i \ge 0$, $\mu_i>0$ for all $i \ge 1$, and $q^\pm_i$ are defined for all $i \ge 0$ with $q_0^- = 0$. In such a case, we also require the standard stability (positive {\color{black} recurrence}) condition,
\begin{equation}
\label{eq:stability}
\sum_{j \in \bbbJ} \prod_{k=1}^j \frac{\lambda_{k-1}}{\mu_k}< \infty,
\end{equation}
{\color{black} see for example Section~1.3 in \cite{weiss2021scheduling}. } 
In this infinite state space case, $P_k$, $\lambda_k$, $\Lambda_k$, and $\lstar$ extend naturally as above with,
\begin{equation} 
\label{eq:EN22-inf} 
\lstar = \sum_{i=0}^{\infty} \pi_i \lambda_i(q^+_i + q^-_{i+1}).
\end{equation}
Now using $R_k$ as in \eqref{eq:R-k} we define the formal expression,
\begin{equation}
\label{eq:Dinfbd}
\RRR_\infty = 1+2 \sum_{k=0}^{\infty} R_k.
\end{equation}
While in this paper we do not prove that $\RRR_\infty$ equals the asymptotic index of dispersion $\RRR$ of \eqref{eq:asymvar}, we conjecture that under suitable regularity conditions, the expression for $\RRR_\infty$ evaluates to the asymptotic index of dispersion in the countably infinite state space case. Some insightful examples and numerical results involving the expression \eqref{eq:Dinfbd} are presented in Section~\ref{sec:exx}. 

\subsection*{Special Cases}

We say the process is a {\em pure birth-counting process} if $q_i^-=0$ for all states $i$ and $q_i^+ > 0$ for at least one state $i$ (note that this term should not be confused with a {\em pure-birth process} which is a process with $\mu_i = 0$ for all $i$). Similarly a {\em pure death-counting process} has $q_i^+ = 0$ for {\color{black} all states $i$} and $q_i^- >0$ for at least one state $i$. Further, we call {\color{black} $N_q(0,t]$} a {\color{black} {\em complete pure birth-counting process}} if in addition to being pure birth-counting it also has $q_i^+ =1$ for all states $i$. And similarly a {\color{black} {\em complete pure death-counting process}} is a pure death-counting process that has $q_i^-=1$ for all states $i$. 

Let us see that our finite state space formula, \eqref{eq:RJgenPformula} from Theorem~\ref{thm:main} generalizes the formula, in Theorem~3.1 of  \cite{NazarathyWeiss0336}. Specifically, \cite{NazarathyWeiss0336} studies {\color{black} complete pure death-counting processes}, and by switching the role of births and deaths, one may also apply the results to {\color{black} complete pure birth-counting processes}. The following corollary applies to this case and one may verify that \eqref{eq:equation-for-RRR-as-nazarathy-weiss} agrees with the results in Theorem~3.1 of \cite{NazarathyWeiss0336}.

\begin{corollary} 
\label{cor:Peqbd}
Given a fixed set of birth and death parameters, $\RRR$ is the same for both the {\color{black} complete pure birth-counting process} case, or the {\color{black} complete pure death-counting process} case. In both of these cases,   
\begin{equation}
\label{eq:equation-for-RRR-as-nazarathy-weiss}
\RRR = 1 + 2\lstar \sum_{k=0}^{J-1} \frac{\bigg(P_k - \Lambda_k^-\bigg)\bigg(P_k  - \Lambda_k^+\bigg)}{\pi_{k}\lambda_k},
\end{equation}
where 
\begin{equation}
\label{eq:special-big-Lambda}
\Lambda_k^- = \frac{\sum_{i=0}^{k-1} \pi_i \lambda_i }{\lstar},
\qquad
\text{and}
\qquad
\Lambda_k^+ = \frac{\sum_{i=0}^{k} \pi_i \lambda_i }{\lstar}.
\end{equation}
\end{corollary}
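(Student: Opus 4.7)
The plan is to substitute the two special choices of $\{q_k^\pm\}$ into Theorem~\ref{thm:main} and check that both yield the same summand, namely the symmetric product appearing in \eqref{eq:equation-for-RRR-as-nazarathy-weiss}.

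First I would verify that the rate $\lstar$ is identical in both cases. From \eqref{eq:EN22}, the fully counting pure-birth case yields $\lstar=\sum_{i=0}^{J-1}\pi_i\lambda_i$, while the fully counting pure-death case yields $\lstar=\sum_{i=1}^{J}\pi_i\mu_i$; the two agree by the detailed balance relation \eqref{detbalance}. Next I would evaluate $\Lambda_k$ via \eqref{eq:Lambda} in each case. The birth choice $q^+_i\equiv 1$, $q^-_i\equiv 0$ collapses the numerator of \eqref{eq:Lambda} to $\sum_{i=0}^{k}\pi_i\lambda_i$, matching $\Lambda_k^+$ in \eqref{eq:special-big-Lambda}. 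The death choice $q^+_i\equiv 0$, $q^-_i\equiv 1$ (with $q_0^-=0$) collapses it to $\sum_{i=0}^{k-1}\pi_i\lambda_i$, matching $\Lambda_k^-$. In particular these two distributions satisfy the key identity $\Lambda_k^+-\Lambda_k^-=\pi_k\lambda_k/\lstar$, which will drive the rest of the argument.

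Then I would substitute into the summand \eqref{eq:R-k}. In the birth case $q_k^+-q_{k+1}^-=1$ for every $0\le k\le J-1$ (the boundary value $q_J^-=0$ causes no issue), giving
\[
R_k = (P_k-\Lambda_k^+)\left(\frac{\lstar(P_k-\Lambda_k^+)}{\pi_k\lambda_k}+1\right).
\]
Combining over the common denominator and using $\lstar(P_k-\Lambda_k^+)+\pi_k\lambda_k=\lstar(P_k-\Lambda_k^-)$ (which is the above identity rearranged) factors this as $\lstar(P_k-\Lambda_k^-)(P_k-\Lambda_k^+)/(\pi_k\lambda_k)$. The death case is symmetric: $q_k^+-q_{k+1}^-=-1$ (the boundary $q_{J-1}^+=0$ again causes no issue), and the analogous manipulation using $\lstar(P_k-\Lambda_k^-)-\pi_k\lambda_k=\lstar(P_k-\Lambda_k^+)$ produces the very same product.

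Hence both specializations produce identical $R_k$ terms, so the two values of $\RRR$ coincide, and \eqref{eq:RJgenPformula} reduces directly to \eqref{eq:equation-for-RRR-as-nazarathy-weiss}. There is no real conceptual obstacle here; the work is a direct algebraic reduction of Theorem~\ref{thm:main}, and the symmetry between birth- and death-counting is exposed entirely by the single identity $\Lambda_k^+-\Lambda_k^-=\pi_k\lambda_k/\lstar$, which turns the $+1$ offset in the birth summand and the $-1$ offset in the death summand into the same symmetric product of factors $P_k-\Lambda_k^\pm$.
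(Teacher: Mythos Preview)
Your proposal is correct and follows essentially the same approach as the paper: verify that $\lstar$ coincides in the two cases via detailed balance, identify $\Lambda_k$ with $\Lambda_k^+$ or $\Lambda_k^-$ respectively, and then substitute into \eqref{eq:R-k} using the identity $\Lambda_k^+-\Lambda_k^-=\pi_k\lambda_k/\lstar$ to collapse the $\pm1$ offset into the symmetric product. The only (harmless) slip is in your parenthetical boundary remarks: the values $q_J^-$ and $q_{J-1}^+$ you flag are not boundary conventions at all but simply instances of the blanket choice $q_i^\mp\equiv 0$ in each case, so there was never any issue to worry about.
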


Note that in the {\color{black} complete pure birth-counting process} case, $\Lambda_k$ of \eqref{eq:Lambda} is $\Lambda_k^+$ of \eqref{eq:special-big-Lambda} and similarly in the {\color{black} complete pure death-counting process} case, $\Lambda_k$ is $\Lambda_k^-$. Note also that we take $\Lambda_0^- =0$.

\begin{proof}
It follows from Equation \eqref{eq:lambda-bar-def-first} and \eqref{detbalance} that $\lstar = \sum_{j = 0}^{J-1} \pi_j \lambda_j$ for both the {\color{black} complete pure birth-counting process} and {\color{black} complete pure death-counting process}. However, considering \eqref{eq:Lambda} we see that $\Lambda_k$ differs for these cases and is $\Lambda_k^-$ and $\Lambda_k^+$ for the {\color{black} complete pure birth-counting process} and {\color{black} complete pure death-counting process} respectively.

Now using Theorem~\ref{thm:main}, for the {\color{black} complete pure birth-counting process} case we obtain the following expression for $R_k$ in \eqref{eq:R-k},
\begin{align*}
R_k &=  (P_k -\Lambda_k)\bigg(\frac{\lstar (P_k-\Lambda_k) }{\pi_{k}\lambda_k} + 1 \bigg)\\
&=(P_k -\Lambda_k)\bigg(\frac{\lstar (P_k-\Lambda_{k-1}) }{\pi_{k}\lambda_k} \bigg)\\ 
&=         
\lstar\frac{\bigg(P_k - \Lambda_k^-\bigg)\bigg(P_k  - \Lambda_k^+\bigg)}{\pi_{k}\lambda_k}.
\end{align*}
Similarly, for the {\color{black} complete pure death-counting process} we obtain the same expression on the right hand side. Hence we obtain  \eqref{eq:equation-for-RRR-as-nazarathy-weiss} for both of these cases.        
       
\end{proof}

\section{Proof of the main result}
\label{sec:finite-proof}

We break up the proof into several subsections. First we present a lemma for an explicit inverse of a $J \times J$ matrix. This result maybe of independent interest. Then we construct a renewal-reward process related to $N_q$. Then we compute the asymptotic variance of the renewal reward process based on moments of quantities within a regeneration period. We then carry out a Laplace transform with generating function analysis, to obtain expressions for these moments. Finally, we use the explicit matrix inverse to assemble the pieces together and obtain the result.

\subsection*{An Explicit Matrix Inverse}

Define the following $J \times J$ matrix,
\begin{equation}\label{eq:W}
 \bW =
 \left[ \begin{array}{cccccccc}
 r_1& -\lambda_1 & 0 & . & \cdots & . & 0 \\
  -\mu_2 & r_2 & -\lambda_2 & 0 & \cdots & . & . \\
   0 & -\mu_3 & r_3 & -\lambda_3 & \cdots & . & . \\
   . & 0 & \ddots & \ddots & \ddots & \vdots & \vdots \\
   \vdots & \vdots & \ddots & \ddots & \ddots & \ddots & 0 \\
   . & . & . & \cdot & -\mu_{J-1} & r_{J-1} & -\lambda_{J-1} \\
   0 & . & . & \cdots & 0 & -\mu_J & r_J \\
\end{array} \right],
\end{equation}
where $r_i = \lambda_i + \mu_i$, and $\lambda_i, \mu_i > 0$.

Based on computational exploration with trial and error, we were able to guess an explicit form for the matrix inverse.

\begin{lemma} \label{lem:W}
The matrix $\bW$ is non-singular with elements of the inverse given by,
\begin{equation} 
\label{eq:Wexplicit}
\big(\bW^{-1}\big)_{ij} = 
 \sum_{k=1}^{\min\{i,j\}} \frac{\pi_j}{\pi_k\mu_k}
\qquad(i,j=1,\ldots, J),
\end{equation}
{\color{black} where $\pi_i$ is given by \eqref{eq:21}.}
\end{lemma}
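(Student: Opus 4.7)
The plan is a direct entrywise verification: set $A_{ij} := \sum_{k=1}^{\min(i,j)} \pi_j/(\pi_k\mu_k)$ for $i,j \in \{1,\ldots,J\}$ and check that $\bW A = I$, from which non-singularity of $\bW$ and the stated formula follow simultaneously. No separate invertibility argument is then needed.

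Because $\bW$ is tridiagonal, for an interior row $2 \le i \le J-1$ the product collapses to
\[
(\bW A)_{il} = -\mu_i A_{i-1,l} + (\lambda_i+\mu_i) A_{il} - \lambda_i A_{i+1,l},
\]
with the natural boundary adjustments: at $i = 1$ the $A_{0,l}$ term is absent, and at $i = J$ the $A_{J+1,l}$ term is absent, using $r_J = \mu_J$ (i.e.\ the paper's convention $\lambda_J = 0$ from Section~\ref{sec:main-result}). I would then split on the relation between $l$ and $i$.

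For $l = i$ the identities $A_{ii} - A_{i-1,i} = 1/(\pi_i\mu_i)$ and $A_{i+1,i} = A_{ii}$ follow immediately from the $\min$ in the upper summation limit; the $\lambda_i A_{ii}$ contribution from the diagonal then cancels $\lambda_i A_{i+1,i}$, leaving $\mu_i(A_{ii}-A_{i-1,i}) = 1$ as required. For $l > i$, the differences $A_{il} - A_{i-1,l} = \pi_l/(\pi_i\mu_i)$ and $A_{i+1,l}-A_{il} = \pi_l/(\pi_{i+1}\mu_{i+1})$ are single-term, and the three-term combination reduces to $\pi_l\bigl[\mu_i/(\pi_i\mu_i) - \lambda_i/(\pi_{i+1}\mu_{i+1})\bigr]$, which vanishes thanks to the detailed-balance identity $\pi_i\lambda_i = \pi_{i+1}\mu_{i+1}$ from~\eqref{detbalance}. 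The case $l < i$ is entirely analogous, with the roles of $\mu_i$ and $\lambda_i$ swapped and detailed balance again driving the cancellation.

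The main obstacle is purely organisational: one must keep the three subcases ($l<i$, $l=i$, $l>i$) aligned with the four boundary cells ($i \in \{1,J\}$ or $l \in \{1,J\}$) without the algebra becoming opaque. Conceptually the proof rests on a single ingredient --- the detailed-balance relation \eqref{detbalance} --- combined with the trivial telescoping of the partial sums defining $A_{ij}$. A more structural route via LDU factorisation (noting that $\diag(\pi_1,\ldots,\pi_J)\,\bW$ is symmetric by detailed balance) or via a probabilistic Green's-function interpretation for a birth--death chain absorbed at $0$ is also available, but each introduces extra scaffolding that the direct entrywise check avoids.
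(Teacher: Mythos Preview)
Your proposal is correct and follows essentially the same direct entrywise verification as the paper, the only cosmetic difference being that the paper checks $A\bW = I$ (right-multiplication) whereas you check $\bW A = I$ (left-multiplication). Two harmless slips: $A_{ii}-A_{i-1,i}=\pi_i/(\pi_i\mu_i)=1/\mu_i$ rather than $1/(\pi_i\mu_i)$, and for $l<i$ the three entries $A_{i-1,l},A_{il},A_{i+1,l}$ in fact coincide, so that case collapses via $-\mu_i+(\lambda_i+\mu_i)-\lambda_i=0$ without invoking detailed balance.
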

\begin{proof}
Write $\bW = (w_{ij}), \ \bW^{-1} = (u_{ij})$. We leave the reader to
check the case $i=j=J$, namely $(\bW^{-1}\bW)_{JJ} = 1$.  Otherwise, 
\begin{equation*}
(\bW^{-1}\bW)_{11} = u_{11}w_{11} + u_{12}w_{21}
  = \frac{\pi_1}{\pi_1\mu_1}(\lambda_1+\mu_1) 
   - \frac{\pi_2}{\pi_1\mu_1}\mu_2
  = 1;\end{equation*}
for $i=1$ and $j=2,\ldots,J,$
\begin{align*}
(\bW^{-1}\bW)_{1j} &= u_{1,j-1}w_{j-1,j} + u_{1j}w_{jj} + u_{1,j+1} w_{j+1,j}\\
  & = \frac{\pi_{j-1}}{\pi_1\mu_1}\bigg[
  - \lambda_{j-1} + \frac{\lambda_{j-1}}{\mu_j}(\lambda_j+\mu_j)
  - \frac{\lambda_{j-1}\lambda_j}{\mu_j\mu_{j+1}} \mu_{j+1} \bigg]=0;
\end{align*}
for $i,j\ge2, \; i\ne j$,
\begin{align*}
(\bW^{-1}\bW)_{ij} &= u_{i,j-1}w_{j-1,j} + u_{ij}w_{jj} + u_{i,j+1} w_{j+1,j}\\
 &= -u_{i,j-1}\lambda_{j-1} +u_{ij}(\lambda_j+\mu_j) -u_{i,j+1}\mu_{j+1}\\
 &= \bigg(\sum_{k=1}^{\min\{i,j\}} \frac1{\pi_k\mu_k} \bigg)
\pi_{j-1} \bigg[ - \lambda_{j-1} + \frac{\pi_j}{\pi_{j-1}}(\lambda_j+\mu_j)
  - \frac{\pi_{j+1}}{\pi_{j-1}} \mu_{j+1}\bigg]=0;
\end{align*}
and for $2 \le i=j\le J-1$,
\begin{align*}
(\bW^{-1}\bW)_{ii} &= u_{i,i-1}w_{i-1,i} + u_{ii}w_{ii} + u_{i,i+1} w_{i+1,i}\\
 &= -u_{i,i-1}\lambda_{i-1} +u_{ii}(\lambda_i+\mu_i) -u_{i,i+1}\mu_{i+1}\\
 &= \bigg(\sum_{k=1}^{i-1} \frac1{\pi_k\mu_k} \bigg)
\pi_{i-1} \bigg[ - \lambda_{i-1} + \frac{\pi_i}{\pi_{i-1}}(\lambda_i+\mu_i)
  - \frac{\pi_{i+1}}{\pi_{i-1}} \mu_{i+1}\bigg] \\
   &\qquad\qquad\qquad
  + \frac{\pi_i}{\pi_i\mu_i}\Big[(\lambda_i+\mu_i) -
  \frac{\pi_{i+1}}{\pi_i}\mu_{i+1}\Big]\,=\, 0+1 \,=\, 1.
\end{align*}
Combining these cases confirms \eqref{eq:Wexplicit}.
\end{proof}

\subsection*{A Renewal Reward Process}

We exploit regeneration epochs
in the sample path of the BD process $Q$ at exit times $\tilde{t}$
from the state 0, i.e.\ $\{Q(\tilde{t}-)=0, Q(\tilde{t})=1\}$, focusing on
the generic time $X$ that elapses between such epochs and the number $Y$ of
birth and death epochs that are counted (i.e.\ are not
thinned) by the counting function {\color{black} $N_q(\cdot)$}
during this time-interval of length $X$.  Then {\color{black} $N_q(\cdot)$} has asymptotic
behaviour the same as a renewal--reward process $C(\cdot)$ for which 
$\lim_{t\to\infty} t^{-1} \var C(t)$ is known (\cite{brown1974soa} and
\eqref{eq:momlimits}--\eqref{eq:RRRformula} below).  This
derivation shows that the initial distribution of $Q(0)$ plays no part
in the theorem when $J$ is finite, i.e.\ the result depends only on the
existence and properties of a stationary distribution $\{\pi_i\}$, and thus holds both for a stationary and non-stationary version of the process.

Let $\calT$ be the set of regeneration times in $\Rpos$ when
$Q(\cdot)$ exits the state 0, i.e.\
\begin{equation} \label{eq25} 
  \calT = \{\tilde{t} : Q(\tilde{t}-)=0,\,Q(\tilde{t})=1 \}.
\end{equation} 
Because $\pi_0>0$, $\calT$ is a countably infinite sequence which a.s.\ has no
finite limit point; enumerate
$\calT$ as the increasing sequence $\{T_i\}$.  Denote
\begin{equation} \label{eq26}
  X_i = T_i - T_{i-1} \quad (i=1,2,\ldots),
\end{equation} 
and write $Y_i = N_q(T_{i-1}, T_i]$ for the number of transitions counted (after thinning)
during the half-open time interval shown.  Write also
\begin{equation}
\label{eq:n-tilde-pm-ij}
  \tilde{n}^\pm_{i,j} := \card
 \big\{t \in (T_{i-1},T_i] : Q(t-) =j, \ Q(t)=j\pm 1\big\},
\end{equation}
respectively, for which almost surely for every $i$,
$\tilde{n}^-_{i,0} = 0$, $\tilde{n}^+_{i,0}=1=\tilde{n}^-_{i,1}$,
 $\tilde{n}^+_{i,j}=\tilde{n}^-_{i,j+1} \in \{0,1,\ldots\}$   
for $j=1,\ldots,J-1$, and    
\begin{equation} \label{eq27}
  Y_i := \sum\nolimits_{j=0}^{\Js-1} \mbox{Bin}(\tilde{n}^+_{i,j},\,q^+_j)
  + \sum\nolimits_{j=1}^\Js \mbox{Bin}(\tilde{n}^-_{i,j},\,q^-_j)
  =: Y_i^+ + Y_i^-\,,
\end{equation}
say, where $\Bin(k,p)$ denotes a binomial $(k,p)$ r.v.
Because the process $Q(\cdot)$ has the strong Markov property, the sequence
$\big\{(X_i,Y_i), i \ge 1 \big\}$ is a sequence of i.i.d.\ random vectors.
Let $(X,Y)$ denote a generic element of this sequence.  Define 
\begin{equation} \label{eq:Ndecomp}
  N_\calT(t) := \sup \Big\{ \ell : \sum\nolimits_{i=1}^\ell X_i \le t \Big\},
   \qquad C(t) = \sum\nolimits_{i=1}^{N_\calT(t)} Y_i.
\end{equation}
Then $C(\cdot)$ is a {\sl renewal--reward process}; $C(t)$ is the number of
BD transitions counted during regeneration intervals $X_i$ wholly contained in
$(0,t]$, and \eqref{eq:Ndecomp} exhibits $Y_i$ as
the sum of counted BD transitions accumulating during each interval $X_i$ and
finally included in $C(\cdot)$ at the regeneration epochs occurring by $t$.  
From our definitions,
\begin{equation} \label{eq:NCrelation}
 N_q(0,t] = C(t) + \widetilde{N}_q(t),
\end{equation}
where $\widetilde{N}_q(t)$ is the number of counted BD transitions occurring
in the time interval $(T_{N_q(t)-1}, t]$.  Take first and second moments of
both sides of \eqref{eq:NCrelation}, divide by $t$ and let $t\to\infty$.  Then
\begin{equation}\label{eq:momlimits}
 \begin{aligned}
 \lim_{t\to\infty}\frac{\E\big[N_q(0,t]\big]}{t}
  &=\lim_{t\to\infty}\frac{\E\big[C(t)\big]}{t},\\
   \lim_{t \to \infty} \frac{\var N_q(0,t]}{t}
  &= \lim_{t \to \infty} \frac{\var C(t)}{t},
 \end{aligned}
\end{equation}
where we have used the fact that
$\E\big[\big(\widetilde{N_q}(t)\big)^2\big] < \infty$,
hence  $\cov\big(C(t), \widetilde{N}_q(t)\big)=O(\sqrt{t}\,)$. 
The two limit relations at
\eqref{eq:momlimits} imply that
\eqan{
\RRR = \lim_{t\to\infty} \frac{\var C(t)}{\E\big[C(t)\big]},
 }
and thus we may study $C(t)$ in place of $N_q(0,t]$.

A representation of the asymptotic variance of renewal--reward processes in
terms of moments of a generic element $(X,Y)$ is long known
(see \cite{brown1974soa}), from which
work it follows that when $X$ and $Y$ have finite second-order moments,
\begin{equation} \label{eq:RRRformula}
\RRR = \frac{ \var\big(Y - \E[Y] (X/\E[X])\big)}{\E[Y]}  
  = \frac{\E(Y^2)- 2R\, \E(XY) + R^2\, \E(X^2)}{\E(Y)},
\end{equation}
where $R = \E(Y)/\E(X)$, showing that $\RRR$ is determined by $\E[X]$,
$\E[Y]$, $\E[XY]$, $\E[X^2]$ and $\E[Y^2]$.
In \eqref{eq:RRRformula}, $Y$ is the number of counted births and
deaths in a generic regeneration interval of duration $X$, being a generic
recurrence time of the state 0 of the stationary Markov process $Q$, so
$\E(X)=1/(\pi_0\lambda_0)$.  Births and deaths occur in $Q$ at rate $\lstar_\star$,
and $N_q(\cdot)$ counts a proportion $\varpi$ of these (see \eqref{countratio})
at rate $\lstar = \varpi\lstar_\star$, so for the pair $(X,Y)$, 
 the mean count $\E(Y)$ equals
 $\varpi \lstar_\star \E(X)$.   Thus, 
$R = \E(Y)/\E(X) = \lstar$ and $\E(Y)=\lstar/(\pi_0\lambda_0)$. 

To compute these moments we study the evolution of the $\bbbJ$-valued strong
Markov process $Q$ in more detail over a generic regeneration interval.
Consider the r.v.s, for $j, k=1,\ldots,J$, 
\begin{align}\label{regintXrvs}
 \tau_k &:= \inf \{t>0 : Q(t) = 0 \mid Q(0)=k\},\nonumber\\
 \breve{n}^\pm_{j,k} &:=
\card\big\{t\in(0,\tau_k] : Q(t-) =j,\, Q(t)=j\pm 1 \mid Q(0)=k\big\},\\
\Nbp(\tau_k) &:= \big(\textstyle\sum_{j=1}^{\Js-1}\Bin(\breve{n}^+_{j,k},q^+_j)
 + \sum_{j=1}^\Js \Bin(\breve{n}^-_{j,k},q^-_j)\mid Q(0)=k\big),\nonumber
\end{align}
where for given $k$ and $\breve{n}^\pm_{j,k}$, each of the sequences of
binomial r.v.s $\{\Bin(\breve{n}^\pm_{j,k},q^\pm_j): j=1,\ldots,J\}$ has
mutually independent elements.  (Strictly, each of the r.v.s defined in
\eqref{regintXrvs} is a functional defined on the sample path $\{Q(u) :
0\le u\le\tau_k\}$, where given the sample path, each of $\breve{n}^\pm_{j,k}$
records the numbers of births or deaths resp., and then, conditional on these
numbers, $N_J(\tau_k)$ counts the number of births or deaths included in the
two components of $N_q(\cdot)$ at \eqref{eq:19StanleySt} over the regenerative
interval $X_i$ of which
$\tau_k$ is part; so, {\em conditional} on the sample path of $Q(\cdot)$, hence
also $\{\breve{n}^\pm_{j,k}\}$,
$N_J$ has the representation as shown
in terms of independent Binomial r.v.s.). 
Note the distinction between the counting
r.v.s $\breve{n}^\pm_{j,k}$ here and $\tilde{n}^\pm_{i,j}$ in \eqref{eq:n-tilde-pm-ij}.

Any regeneration interval $X_i$ as at \eqref{eq26} starting at time $T_{i-1}$
consists first of an interval which we call the busy period, and then of an interval which we call the idle period. The busy period is distributed as $\tau_1$ and the idle period is distributed as an exponential random variable, say $\sigma_0$, with rate $\lambda_0$. With this,
%
$X_i =^d \tau_1 + \sigma_0 $, and $Y_i =^d  N_J(\tau_1) +\Bin(1,q_0^+)$.  In these representations,
$\sigma_0$ is independent of both $\tau_1$ and $N_J(\tau_1)$, and
$\Bin(1,q_0^+)$ and $N_J(\tau_1)$ are independent; of course $\tau_1$ and
$N_J(\tau_1)$ are in general dependent.

For $h=1,2$, define the moments
\begin{align} \label{BPdefns}
\tau^{(h)}_k &= \E[\tau_k^h] \,=\, \E[\tau_k^h \mid Q(0) = k],
\nonumber\\
n^{(h)}_k &=
 \E\big[\big(\Nbp(\tau_k)\big)^h \mid Q(0)=k\big], \\
c_k^{(1)} &= 
\E [ \tau_k \,\Nbp(\tau_k) \mid Q(0)=k].
\nonumber
 \end{align}
Observe that $\tau^{(1)}_0=n^{(1)}_0=c^{(1)}_0=\tau^{(2)}_0=n^{(2)}_0=0$. Below
we develop expressions for the moments $\tau_1^{(1)}$, $n^{(1)}_1$, $c^{(1)}_1$,
$\tau^{(2)}_1$ and $n^{(2)}_1$, using these to find the moments $\E[XY]$, $\E[X^2]$, and $\E[Y^2]$ for \eqref{eq:RRRformula}. Recalling the expressions for $\E[X]$, $\E[Y]$ we have
\begin{equation} \label{eq:EXEXXEXY}
\begin{aligned}
 \E[X]&= \tau_1^{(1)} + \frac1{\lambda_0} = \frac1{\pi_0\lambda_0},\\
\E[Y] &= n_1^{(1)} + q_0^+ \,=\, \frac{\lstar}{\pi_0 \lambda_0}.
\end{aligned}
\end{equation}
And thus,
\begin{equation}
\label{eq:tau-and-n-ones}
\tau_1^{(1)} = (1-\pi_0)/ (\pi_0\lambda_0),
\qquad
\text{and} 
\qquad
n_1^{(1)} = (\lstar - \pi_0 \lambda_0 q_0^+ )/(\pi_0 \lambda_0).
\end{equation}
Now continuing with the busy period and idle period relationship we have,
\begin{equation} \label{eq:EXEXXEXY-second}
\begin{aligned}
 \E[X^2]&= \tau_1^{(2)} + \frac{2\tau_1^{(1)}}{\lambda_0} + \frac2{\lambda_0^2}
  \,=\, \tau_1^{(2)} + \frac2{\pi_0\lambda_0^2}\,,\\
 \E[X Y] &= c_1^{(1)} + \frac{n_1^{(1)}}{\lambda_0} + q_0^+\Big(\tau_1^{(1)}+
\frac1{\lambda_0}\Big) \,=\, c_1^{(1)} + \frac{n_1^{(1)}}{\lambda_0}
   + \frac{q_0^+}{\pi_0\lambda_0}\,,\\
   \E[Y^2] &= n_1^{(2)} + 2n_1^{(1)} q_0^+ + q_0^+\,.
\end{aligned}
\end{equation}
Now using \eqref{eq:RRRformula} and \eqref{eq:tau-and-n-ones} we have,
\begin{equation}
\label{eq:ey-d-all-together}    
\E(Y) \RRR 
=
n_1^{(2)} - 2\lstar c_1^{(1)} + \lstar^2 \tau_1^{(2)} +  q_0^+\Big(1 - 2 q_0^+ + 2\pi_0 \E(Y) \Big).
\end{equation}

\subsection*{A Transform Based Calculation}

To use \eqref{eq:ey-d-all-together}, we now find a representation for the moments $n_1^{(2)}$, $c_1^{(1)}$, and $\tau_1^{(2)}$ using first step analysis. Note that this process also yields expressions for the first moments $\tau_1^{(1)}$ and $n_1^{(1)}$ which naturally agree with those computed above. For such a first step analysis, the r.v.s at \eqref{regintXrvs} are amenable to study via a backwards
decomposition in terms of the BD process $Q$ as follows.  Given $Q(0)=k$ for
some $k=1,\ldots,J$, define $\phi_k(s,z) = \E[\e^{-s \tau_k} z^{N_J(\tau_k)}
\mid Q(0)=k]$.  The first transition in $Q$ from $Q(0)=k$ occurs after the
random time $\sigma_k$ which is exponentially distributed with mean
$1/r_k := 1/(\lambda_k+\mu_k)$, after which $Q= k\pm 1$
(at rate $\lambda_k$ for +1, and $\mu_k$ for $-1$), and $N_J(\tau_k)$
increases by 1 with probability $q_k^\pm$ depending on $\pm1$ change in $Q$.
Using indicator r.v.s $I(q)=1$ with probability $q$, $=0$ otherwise, these
relations can be written as
\begin{equation}\label{eq:backwdpgf}
   (\tau_k, N_J(\tau_k)) = 
  \Bigg\{
\begin{aligned}
&\big(\sigma_k + \tau_{k+1}, \,I(q_k^+) + N_J(\tau_{k+1})\big)
   \;\;\text{with prob. } \lambda_k/r_k,\\
&\big(\sigma_k + \tau_{k-1}, \,I(q_k^-) + N_J(\tau_{k-1})\big)
   \;\;\text{with prob. } \mu_k/r_k.\\
\end{aligned}
  \end{equation}
Using generating functions and $\E[\e^{-s\sigma_k}] = r_k/(s+r_k)$ we deduce that
\begin{equation} \label{phipgf}
  (s+r_k)\phi_k(s,z) = 
   \lambda_k(z q_k^++1-q_k^+)\phi_{k+1}(s,z) 
       + \mu_k(z q_k^- + 1 - q_k^-)\phi_{k-1}(s,z).
  \end{equation} 

Differentiation in $s$ and $z$ and setting $(s,z)=(0,1)$ yields the relations   
for $k = 1,\ldots,J$ (we can and do include $k=1$ and $k=J$ because
$\lambda_J=0$ and $\tau^{(1)}_0=n^{(1)}_0=c^{(1)}_0=\tau^{(2)}_0=n^{(2)}_0=0$):
\begin{align} \label{eq:moms}
r_k\tau^{(1)}_k - \lambda_k \tau^{(1)}_{k+1}
  - \mu_k \tau^{(1)}_{k-1} &= 1, \nonumber\\
r_k n^{(1)}_k - \lambda_k n^{(1)}_{k+1}
 - \mu_k n^{(1)}_{k-1} &= \lambda_k q^+_k + \mu_k q^-_k,
\nonumber\\
r_k c^{(1)}_k - \lambda_k c^{(1)}_{k+1}
  - \mu_k c^{(1)}_{k-1} &=  n^{(1)}_k + \lambda_k q^+_k \tau^{(1)}_{k+1} 
  + \mu_k q^-_k \tau^{(1)}_{k-1}, \\
r_k \tau^{(2)}_k - \lambda_k \tau^{(2)}_{k+1}
  - \mu_k \tau^{(2)}_{k-1} &=  2 \tau^{(1)}_k, \nonumber\\
r_k n^{(2)}_k - \lambda_k n^{(2)}_{k+1} - \mu_k n^{(2)}_{k-1}
&= 
\lambda_k q_k^+ + \mu_k q^-_k +
 2(\lambda_k q^+_k n^{(1)}_{k+1} + \mu_k q^-_k n^{(1)}_{k-1}).\nonumber
\end{align}
In these five equations the left-hand sides are of the same form.  Moreover
this form is familiar from the study of BD equations because they come from a
backward decomposition of the first-passage time r.v.s $\{\tau_k\}$ and
functionals $N_J(\tau_k)$, all defined in terms of the underlying BD process
$Q$. 

\subsection*{Assembling the Components for the Final Expression}

We now represent \eqref{eq:moms} in matrix form using 
$J$-vectors (taken as columns), for $h=1,2,$
$\btau^{(h)} = {(\tau_1^{(h)}, \ldots,\tau_J^{(h)})}$,  
$\bn^{(h)} = {(n_1^{(h)},\ldots, n_J^{(h)})}$ and
$\bc^{(1)} = {(c_1^{(1)},\ldots,c_J^{(1)})}$ and using the $J \times J$ matrix $\bW$ from \eqref{eq:W}.

With this notation, the left-hand sides of \eqref{eq:moms} are expressible as $\bW\bx$ for
the relevant $J$-vector $\bx={(x_1,\ldots,x_J)}$.   
To describe the right-hand sides of the equations at \eqref{eq:moms} in matrix
form write $\ba\smult\bb$ for the $J$-vector consisting of the element-wise
products of the $J$-vectors $\ba = (a_1, \ldots, a_J)$ and $\bb = (b_1, \ldots, b_J)$, $\bb_{[-]}$ for the $J$-vector
$(0, b_1,\ldots,b_{J-1})$,
$\bb_{[+]}$ for the $J$-vector $(b_2,\ldots,b_J, 0)$
and $\bpi = (\pi_1,\ldots,\pi_J)$ so that
$\bpi^\top \bone + \pi_0 = 1$.
Write also $\blambda = (\lambda_1,\ldots,\lambda_J)$,
 $\bmu = (\mu_1,\ldots,\mu_J)$ and ${\bq^\pm} =
(q^\pm_1,\ldots,q^\pm_J)$.

The notation $\ba\circ\bb$ is useful and has $(\ba\circ\bb)\circ\bc
= \ba\circ(\bb\circ\bc) = \ba\circ\bb\circ\bc$ and 
$\bpi^\top(\ba\circ\bb) = \bone^\top(\bpi\circ\ba\circ\bb)$,
but has the drawback that, for a general
$J\times J$ matrix $\bA$, $\bA (\ba\circ \bb) \ne (\bA \ba)\circ\bb$. We also define the $J\times J$-matrix
$\bV =: (v_{ij})$ for $i,j\in\{1,\ldots,J\}$ with $v_{ij}=0$ except for its super- and sub-diagonal elements $v_{i,i+1} = \lambda_i q_i^+$ ($i=1,\ldots,J-1$) and $v_{i,i-1} = \mu_i q^-_i$ $(i=2,\ldots,J)$. This matrix helps facilitate algebraic manipulation.

With these definitions the five equations \eqref{eq:moms} are expressible as,
\begin{align}
\label{eq:moms2}
 \bW\btau^{(1)} &= \bone, \nonumber\\
\bW\bn^{(1)} &= \blambda\smult\bq^+ + \bmu\smult\bq^-,
 \nonumber\\
 \bW\bc^{(1)} &= \bn^{(1)} + (\blambda\smult\bq^+)\smult\btau^{(1)}_{[+]} + 
  (\bmu\smult\bq^-)\smult\btau^{(1)}_{[-]}\\
  &= \bn^{(1)} + \bV \btau^{(1)}
  ,   \nonumber\\
 \bW\btau^{(2)} &= 2\btau^{(1)},
 \nonumber\\
 \bW\bn^{(2)} &= 
 \blambda \smult \bq^+ + \bmu \smult \bq^- +  
 2 \big( \blambda\smult\bq^+ \smult \bn^{(1)}_{[+]}
  ~+~ \bmu \smult \bq^- \smult\bn^{(1)}_{[-]}\big) \nonumber \\
    &= \blambda \smult \bq^+ + \bmu \smult \bq^- + 2 \bV \bn^{(1)}
      \nonumber,
\end{align}
and hence by multiplying each equation in \eqref{eq:moms2} by $\bW^{-1}$ we have a representation of the desired vector. For example $\btau^{(1)} = \bW^{-1} \bone$ and $\bn^{(1)} = \bW^{-1} (\blambda\smult\bq^+ + \bmu\smult\bq^-)$. In both of these cases, by left multiplying the expression with $\be_1=(1,0,\ldots,0)$ we retrieve the previously obtained expressions in \eqref{eq:tau-and-n-ones}. Observe also that 
\begin{equation}
\label{eq:n2-in-terms-of-n1}
\bn^{(2)} = \bn^{(1)} + 2 \bW^{-1} \bV \bn^{(1)}.
\end{equation}
%






We now return to \eqref{eq:ey-d-all-together} and use \eqref{eq:moms2}, \eqref{eq:n2-in-terms-of-n1}, and $\bW^{-1}$ to develop an expression for the sum of the first three terms of \eqref{eq:ey-d-all-together}. Namely, 

 \begin{equation} \label{eq:DJnctau}
 \begin{aligned}
n_1^{(2)} - 2\lstar c_1^{(1)} + \lstar^2 \tau_1^{(2)}
 &= n_1^{(1)} + 2 \be_1^\top \bW^{-1} \bV \bn^{(1)}
  - 2\lstar\be_1^\top \bW^{-1}(\bn^{(1)} + \bV \btau^{(1)})
  + 2 \lstar^2\be_1^\top \bW^{-1} \btau^{(1)} \nonumber \\
 &= n_1^{(1)} + 2 \be_1^\top\bW^{-1} \big( \bV \bn^{(1)} - \lstar \bn^{(1)} 
 - \lstar \bV \btau^{(1)} + \lstar^2 \btau^{(1)} \big) \nonumber \\
 &= n_1^{(1)} + 2\be_1^\top\bW^{-1}(\bV -\lstar\bI)(\bn^{(1)} - \lstar\btau^{(1)})
 \nonumber\\
 &= n_1^{(1)} + \frac{2}{\pi_0\lambda_0} \bpi^\top
  (\bV - \lstar\bI) \bW^{-1}\big(\blambda\smult\bq^+ + \bmu\smult\bq^-
- \lstar\bone\big).
\end{aligned}
\end{equation}
In the last step we used Lemma~\ref{lem:W} to represent $\be_1^\top\bW^{-1}$ as $\bpi^\top/(\pi_1 \mu_1)$ and then use \eqref{detbalance}.


Now returning to \eqref{eq:ey-d-all-together} and we have
\begin{equation} \label{eq:DJAB}
 \RRR - 2\pi_0 q_0^+ - \frac{q_0^+(1 - 2q_0^+) + n_1^{(1)}}{\E(Y)} 
= \frac2{\lstar} \, \ba^\top \bW^{-1}  {\bb}  \,,
\end{equation}
where $\ba^\top := \bpi^\top (\bV-\lstar\bI)$ and
$\bb := \blambda\smult\bq^+ + \bmu\smult\bq^- - \lstar\bone$. Considering the bi-linear form on the right hand side of \eqref{eq:DJAB} we now have,
%
%
\begin{equation*}
  \ba^\top \bW^{-1}\bb 
  = \sum_{i=1}^J \sum_{j=1}^J
  \sum_{k=1}^{\min\{i,j\}} a_i \frac{\pi_j}{\pi_k \mu_k} b_j
  =
  \sum_{k=1}^J\frac1{\pi_k\mu_k}
   \sum_{i=k}^J a_i \sum_{j=k}^J \pi_j b_j,
\end{equation*}
where the first step follows from Lemma~\ref{lem:W} and the second step is due to elementary manipulation, 
$\sum_{i=1}^J \sum_{j=1}^J \sum_{k=1}^{\min\{i,j\}}
= \sum_{k=1}^J \sum_{i=k}^J \sum_{j=k}^J$.


We now define the partial sums $A_k := \sum_{j=1}^k a_j$ and
$B^\pi_k := \sum_{j=1}^k \pi_j b_j$ for $k=0,\ldots,J$. Noting the empty sum case, $A_0 = B^\pi_0 =0$, we now have
\begin{align} 
\label{eq:BCsum}
   \ba^\top \bW^{-1}\bb 
  = \sum_{k=1}^J
   \frac{ (B^\pi_J - B^\pi_{k-1})(A_J - A_{k-1})}{\pi_k\mu_k}
  = \sum_{k=0}^{J-1}
   \frac{ (B^\pi_J - B^\pi_k)(A_J - A_k)}{\pi_k\lambda_k}\,.
\end{align} 

Now using the expression for $\lambda_k$ in  \eqref{eq:little-lambda-k-def} as well as the definitions of $\Lambda_k$ and $P_k$, with simple manipulation we can represent $A_k$ and $B_k^\pi$ as,
\begin{equation*}
 A_k = \begin{cases}
\lstar_k - \pi_1\mu_1 q^-_1 - \pi_0\lambda_0 q^+_0 - \lstar(P_k - \pi_0) - \pi_k\lambda_kq_k^+
+ \pi_k\lambda_k q^-_{k+1},
& k=1,\ldots,J-1, \\
\noalign{\smallskip}
\lstar\pi_0 - \pi_1\mu_1 q^-_1 - \pi_0\lambda_0 q^+_0 , & k=J,
\end{cases}
\end{equation*}
and
\begin{equation*}
B_k^\pi = \lstar(\Lambda_k - P_k) + \pi_0(\lstar - \lambda_0 q_0^+),
\qquad
\text{for}
\qquad 
k=1,\ldots,J,
\end{equation*}
where we observe that $B_J^\pi = \pi_0(\lstar - \lambda_0 q_0^+)$.

Thus in the summation on the right of \eqref{eq:BCsum}, the denominator factors for $k=1,\ldots,J$, can be represented as,
\[
A_J - A_k
= \lstar\big(P_k - \Lambda_k\big) + \pi_k\lambda_k(q_k^+ - q_{k+1}^-)
\qquad
\text{and}
\qquad
B^\pi_J - B^\pi_k = \lstar\big(P_k- \Lambda_k\big).
\]

With these expression and \eqref{eq:BCsum}, \eqref{eq:DJAB}, and $\E(Y) = \lstar/(\pi_0 \lambda_0)$, after some standard manipulation we obtain the expression in the result, \eqref{eq:RJgenPformula}.  This proves Theorem~\ref{thm:main}.

\section{Examples and Illustrations}
\label{sec:exx}

We first present examples of finite state queueing systems where there is modeling value for understanding the asymptotic index of dispersion. We then present examples of infinite systems, based on the formal expression in \eqref{eq:Dinfbd} with a purpose of exploring the validity of that expression. {\color{black} Note that not all examples are related to BRAVO. In particular, in some of the examples below, the process $N_q$ is not related to the output counting process of a queueing system.}

\subsection*{Finite State Queueing Models}

{\color{black} Let us consider two examples arising from finite state queueing models. The first example illustrates the existence of a BRAVO phenomenon in queues with reneging, while the second example studies a process not related to the output of a queue or the BRAVO effect. Note that} the departure process of finite state queues without reneging (M/M/1/K, M/M/s/K, M/M/K/K) was already analyzed extensively in \cite{NazarathyWeiss0336} since the main result there (appearing as Corollary~\ref{cor:Peqbd} in this paper) covers that case. {\color{black} For such queues, BRAVO was observed.} For this current work, we were originally interested in the variance rate of thinned death processes because the output of a queue with reneging has such a structure. That type of process is not covered in \cite{NazarathyWeiss0336} since it requires state dependent thinning. 


\begin{figure}[h]
\begin{center}
\includegraphics[width=4.2in]{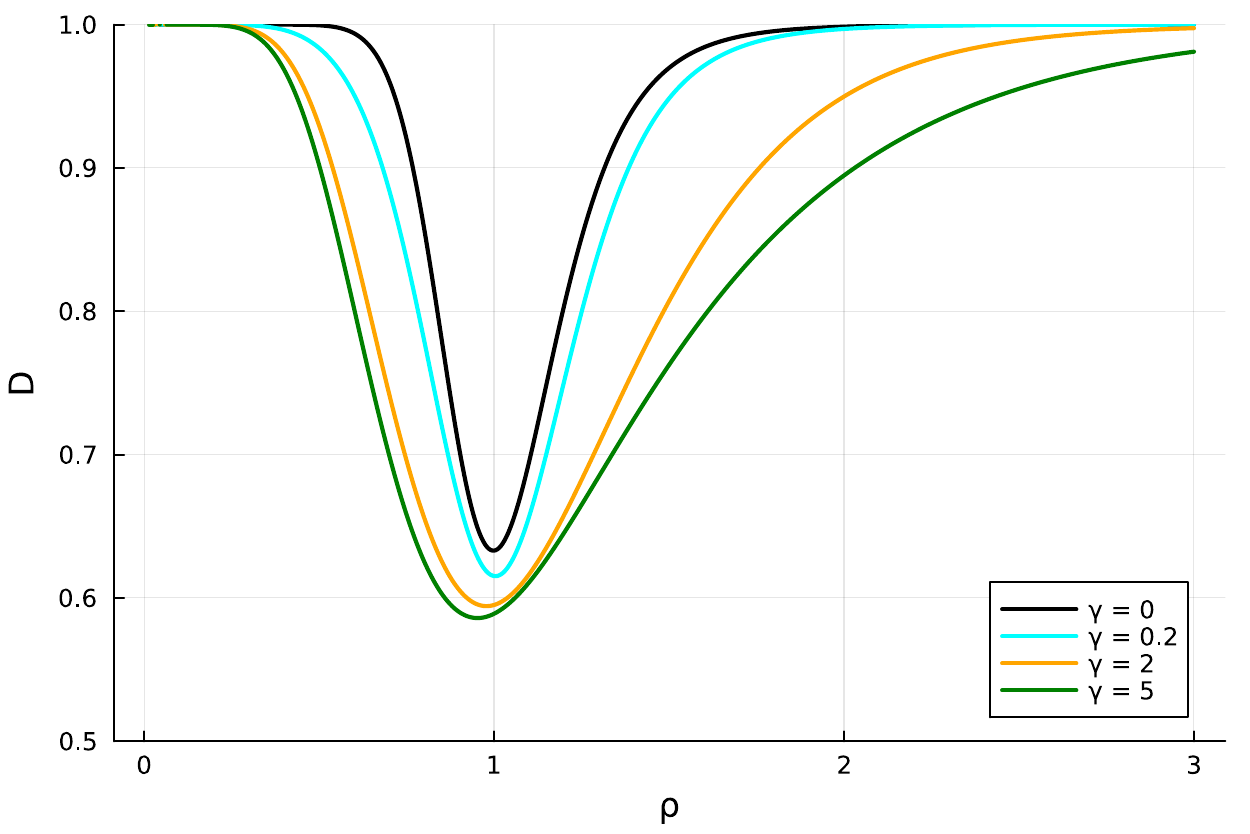}
\captionsetup{width=0.8\linewidth}
\caption{
The asymptotic index of dispersion for M/M/$s$/K$+$M systems with $K=20$, $s=10$, and $\mu = 1$. When the offered load {\color{blue} $\rho \approx 1$}, the asymptotic variability of the output process is reduced in comparison to the Poisson process ($\RRR = 1$) case.
\label{fig:renegFig}}
\end{center}
\end{figure}
 
\begin{example}
{\bf BRAVO for a queue with reneging.}
Consider the many-server Poisson system M/M/$s$ with
reneging and a finite buffer of size $K$.
It is sometimes described as an M/M/$s$/K$+$M system.
In terms of a birth--death
process on finite state
space, we set the state space upper limit $J=K$, the number of servers $s \le K$, the arrival rate $\lambda > 0$, the service rate (per server) $\mu > 0$, and the abandonment rate $\gamma \ge 0$. With these, the BD and thinning parameters are,
\[
\lambda_i=\lambda,
\qquad
\mu_i = \mu \min(i,s) + \gamma \max(i-s,0),
\qquad
q_i^- = \frac{\mu \min(i,s) }{\mu_i}\,,
\qquad
q_i^+ = 0.
\]

Setting $\rho = \lambda/(\mu s)$,   Figure~\ref{fig:renegFig} presents the asymptotic index of dispersion as a function of $\rho$ for various values of $\gamma$. This is for $K=20$, $s=10$ and $\mu=1$. Indeed it is evident that a BRAVO effect appears also in this type of system. {\color{black} It is evident that for $\gamma = 0$ the minimizer is at $\rho = 1$ (this case was also covered in \cite{NazarathyWeiss0336}), whereas for $\gamma >0$ the minimizer deviates slightly from $\rho=1$. There is room for an asymptotic analysis here, similar in nature to \cite{daley2013bravo}, considering the effects of $\gamma$, as well as $K \to \infty$ and $s \to \infty$. We leave such an asymptotic analysis for further research, yet observe at this point that indeed a form {\color{blue} of} the BRAVO effect holds.}

\end{example}

\begin{example}
{\bf Finite population interaction upon arrival.} 
{\color{black} This example illustrates a situation where counting thinned arrivals (as opposed to departures) maybe of interest. We note that this example is not about BRAVO. In particular, this example does not illustrate existence/non-existence of a BRAVO effect since it does not focus on the departure process of a queue. Nevertheless, the resulting curve of $\RRR$ exhibits interesting behavior. Note that we informally refer to this example as ``counting animal fights in a billabong''.}

Consider a finite population queueing system with $J$ individuals in the population, arriving {\color{black} to}, and departing {\color{black} from} an unlimited service (infinite server) queue (billabong). An example is a small {\color{black} finite} population of a certain species occasionally meeting at a fresh water source (billabong). As an additional individual arrives to the water source, there is a possibility of it interacting {\color{black}(e.g. fighting)} with one of the other individuals already there, or not. This probability increases as the number of individuals at the water source increases. {\color{black} With this, we are interested in the counting process of the number of interactions (fights) occurring at the water source.}

For this situation we set the BD and thinning parameters as,
\[
\lambda_i=(J-i)\lambda,
\qquad
\mu_i = i \, \mu,
\qquad
q_i^- = 0,
\qquad
q_i^+ = \frac{i}{i+1},
\]
where we note that $\lambda_i$ and $\mu_i$ in this form capture the finite population and the unlimited service. {\color{black} Further, $q_i^+$ captures the increasing probability of an interaction (fight) upon arrival.}

Figure~\ref{fig:finite-pop} presents the asymptotic index of dispersion as a function of $\lambda$ when $\mu=1$, for various population sizes, $J$. Observe that the asymptotic index of dispersion can be both greater and smaller than unity. Also note (not in figure), that as $\lambda \to \infty$ the asymptotic index of dispersion rises to $1$.

\end{example}

\begin{figure}[h!]
\begin{center}
\includegraphics[width=4.2in]{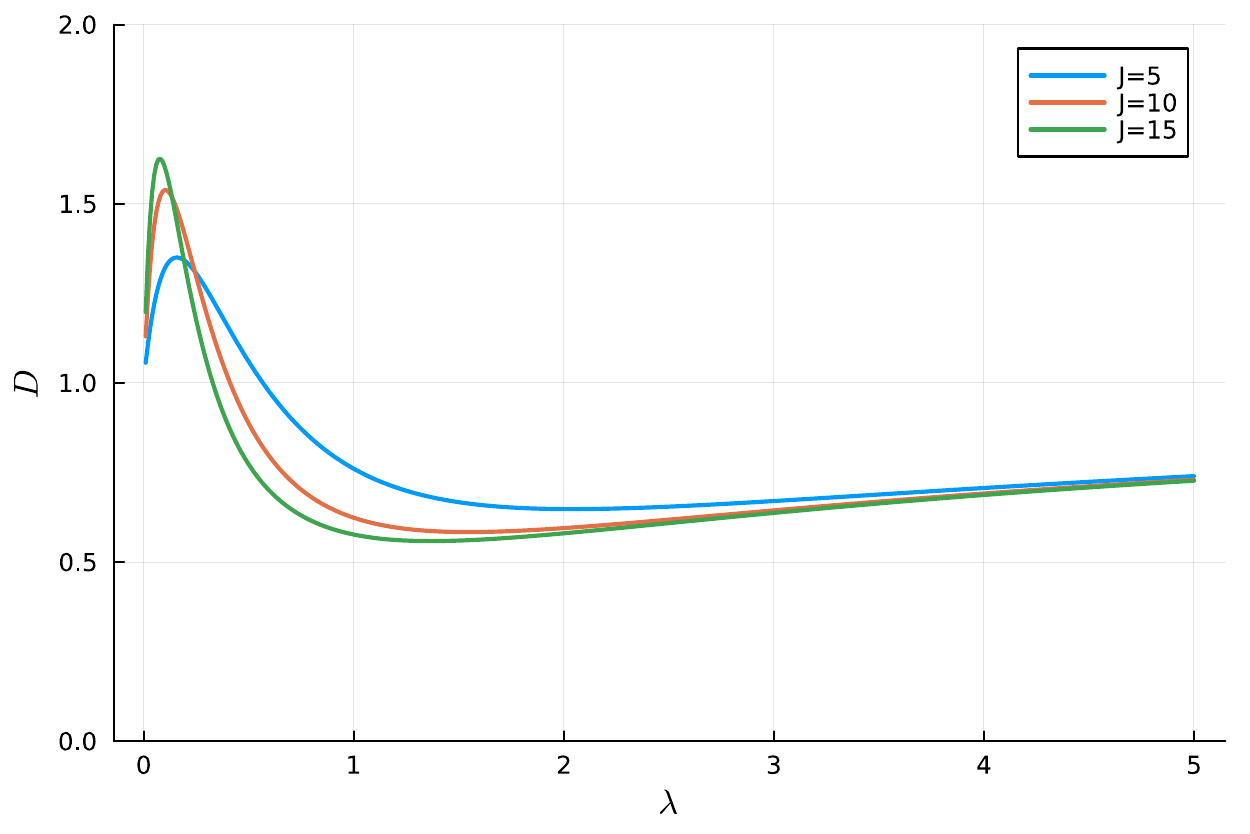}
\captionsetup{width=0.8\linewidth}
\caption{The asymptotic index of dispersion for a counting process of interactions upon arrival to a finite population {\color{black} infinite-}service system.
\label{fig:finite-pop}}
\end{center}
\end{figure}


\subsection*{Infinite State Calculations}

The calculations in this section rely on the formal expression \eqref{eq:Dinfbd} for the infinite state space case. We have not presented a proof for the validity of \eqref{eq:Dinfbd}. In particular, characterization of the conditions for which  \eqref{eq:Dinfbd} is valid is a matter requiring further research. Nevertheless, we believe that the following infinite state space calculations are insightful as they hint at the validity of \eqref{eq:Dinfbd}.

\begin{example}
{\bf Renewal process of positive recurrent M/M/1 busy cycles.} 
Assume $q^+_i \equiv 0$ for all $i$, $q^-_1=1$
and $q^-_i \equiv 0 $ for $i\ge 2$. This is a pure death-counting process. The BD process for a simple queue
has $\lambda_i=\lambda$, $\mu_i=\mu$ and $\rho := \lambda/\mu < 1$.  Further,
$\pi_k = (1-\rho)\rho^k$ and $1-P_k = \rho^{k+1}$. A busy cycle is a time
interval between $Q$ exiting 0. Then \eqref{eq:Dinfbd}
simplifies to,
\begin{equation}
\label{eq:mm1BPrenewal}
\RRR_\infty = 1-2(1-\rho)\rho + 2 \rho^2 \sum_{k=1}^\infty \rho^k
= 1- 2 \rho \frac{1-2\rho}{1-\rho}.
\end{equation}
Interestingly, for $\rho \in (0,\frac12)$, $\RRR < 1$, for $\rho=\frac12$,
$\RRR=1$ and for $\rho \in (\frac12,1)$, $\RRR>1$. The minimum is at
$\rho = \frac12(2- \sqrt{2}) \approx 0.292893$ at which point, $\RRR=4 \sqrt{2}-5 \approx 0.656854$. 
The formula of \eqref{eq:mm1BPrenewal} indeed agrees with classic queueing
theory results based on the so-called M/G/1 busy period functional equation
(see e.g.\ the computation in the proof of
\cite[Proposition 6]{hautphenne2013second}). In this case, using the fact that
$X=B+I$, where $B$ is the busy period r.v.\ and $I$ is the (independent) idle
period r.v., exponentially distributed with rate $\lambda$, it is easy to
obtain,
\begin{align*}
\E[X] &= \frac{1}{\mu} \frac{1}{1-\rho} + \frac{1}{\lambda} = \mu^{-1}\frac{1}{\rho (1-\rho)},\\
\E[X^2] &= \frac{1}{\mu^2}\frac{2}{(1-\rho)^3} +
2\Big(\frac{1}{\mu} \frac{1}{1-\rho}\Big) \Big( \frac{1}{\lambda}\Big) + \frac{2}{\lambda^2}
= 2\mu^{-2} \frac{1-2 \rho(1-\rho)}{\rho^2(1-\rho)^3}.
\end{align*} 
Combining the above gives $\E[X^2]/(\E[X]^2)-1$ consistent with \eqref{eq:mm1BPrenewal}.

%
\end{example}

\begin{example} {\bf Constant birth rates, {\color{black} complete pure death-counting process}.} 
Consider the infinite state space case where $\lambda_i=\lambda$ (constant)
for all $i$, $q^+_i = 0$ (all $i$), $q_i^- \equiv1$, 
$\mu_i = \min\{i,s\}\mu$ and $\rho=\lambda/(s\mu)<1$.
This is the case for output of a stable M/M/$s$ queue.
By reversibility, \cite{kellyBook}, it is known that in
the stationary case the output process is Poisson and hence has $\RRR = 1$.
Indeed using \eqref{eq:Dinfbd}, $\lstar =\lambda$ and
$\Lambda_k=P_{k-1}$ so every product in the numerators in the sum 
preceding \eqref{eq:Dinfbd} is zero, and $\RRR=1$.


Suppose now that $q_i^-=q$ (all $i=1,2,\ldots$) for some $q$ in $(0,1)$.
Then the thinning of departures is independent of the state and
hence the output process is still a Poisson process. Indeed repeating
the calculations (observing that $\lstar = q \lambda$) we again
obtain $\RRR=1$.
\end{example}

\begin{example}
\label{example:}
{\bf M/M/1: Thinning and counting both the input and output processes at constant probabilities.}
Consider again a stable M/M/1 queue with arrival rate $\lambda_i = \lambda$ and service rate $\mu_i = \mu$. Now Let $q_i^+ = q^+$ and $q_i^- = q^-$ for all $i$ where $q^-$ and $q^+$ are two constants in $(0,1]$.

In this case, we have $P_k - \Lambda_k = (\pi_k \, q^-)/(q^+ + q^-)$
which leads to
\[
\RRR_\infty = 1 + H(q^-, q^+), 
\]
where $H(q^-, q^+)$ is the harmonic mean of $q^-$ and $q^+$. In particular if $q^- = q^+ = q$ we get $\RRR_\infty = 1 + q$.

Interestingly, when either $q^+ = 0$ or $q^- = 0$, the counting process becomes a thinned Poisson process  and we obtain $\RRR_\infty = \RRR = 1$. However, when both the input and output processes are counted together, we have $\RRR_\infty > 1$ due to the dependence and positive correlation between the two thinned processes.
\end{example}
\subsection*{Acknowledgements}%
Daryl Daley's work done as an Honorary Professorial Associate at the University of Melbourne. The second and third author are indebted to his mentorship and collegiality throughout the work on this project. Daryl Daley, passed on 16 April 2023 and the last face to face discussion about this manuscript was a month earlier. See \cite{Taylor} for an insightful outline of some his scientific contributions throughout his career. 

Jiesen Wang's research has been supported by the European Union’s Horizon 2020 research and innovation programme under the Marie Sklodowska-Curie grant agreement no. 101034253, and by the NWO Gravitation project NETWORKS under grant agreement no. 024.002.003 \includegraphics[height=1em]{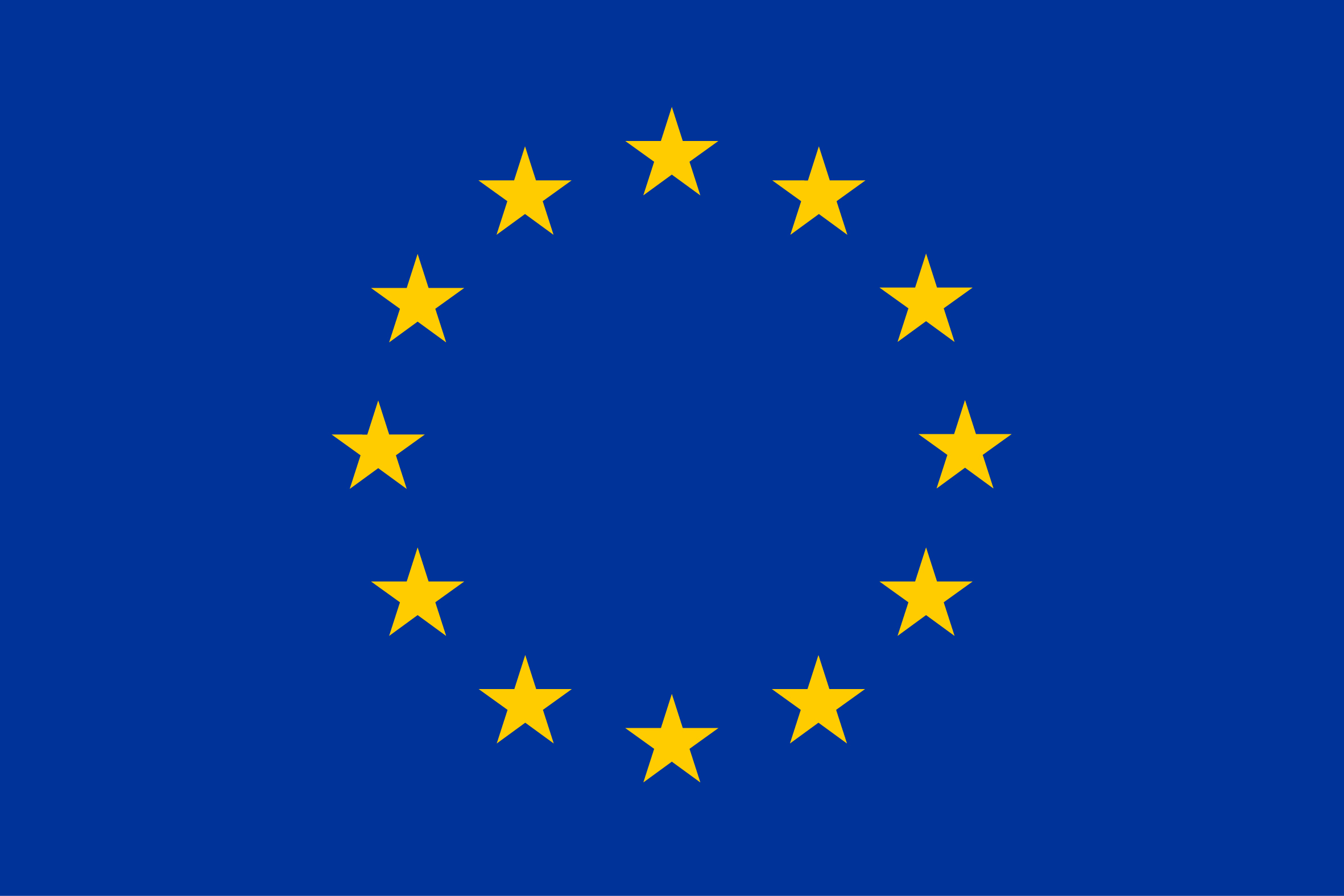}.






\bigskip 

\end{document}